\newcommand{\arxiv}[1]{\href{http://www.arXiv.org/abs/#1}{arXiv:#1}}
\newtheorem{theorem}{Theorem}[section]
\newtheorem{lemma}[theorem]{Lemma}
\theoremstyle{definition}
\theoremstyle{remark}
\numberwithin{equation}{section}
\newcommand{\cset}{\mathbf{C}}
\newcommand{\conv}{\operatorname{conv}}
\newcommand{\cK}{{\mathcal K}}
\newcommand{\cS}{{\mathcal S}}
\def\R{\mathbf R}
\def\Rp{\R_+}
\def\Rpn{\Rp^n}
\def\Rpnn{\Rp^{n\times n}}
\begin{document}


\title[On common eigenvectors for semigroups]{On common eigenvectors for semigroups of matrices in tropical and traditional mathematics}

\subjclass[2010]{15A18, 20M30, 15A80}
\thanks{This work is supported by the RFBR
grant 12--01--00886-a, the joint RFBR-CNRS grant 11--01--93106-a, and the EPSRC grant RRAH15735}


\author{Grigory~B.~Shpiz}

\address{Grigory~B.~Shpiz, 
Scientific Institute for Nuclear Physics in the Moscow State University,
Moscow, Russia}

\email{islc@dol.ru}

\author{Grigory~L.~Litvinov}

\address{Grigory~L.~Litvinov, The A.~A.~Kharkevich Institute for Information Transmission Problems RAS and the Poncelet Laboratory, Moscow, Russia}

\email{glitvinov@gmail.com}

\author{Serge\u{\i}~N.~Sergeev}

\address{Serge\u{\i}~N.~Sergeev, Moscow Center for Continuous Mathematical Education, Russia, and
University of Birmingham, Edgbaston B15 2TT, UK}

\email{sergiej@gmail.com}

\keywords{tropical mathematics, idempotent mathematics, matrix semigroups,
nonegative matrices, common eigenvector}

\begin{abstract}
We prove the existence of a common eigenvector for commutative, nilpotent and quasinilpotent semigroups of matrices
with complex or real nonnegative entries both in the conventional and tropical linear algebra.
\end{abstract}

\maketitle

\vspace{1cm}

\normalsize

\section{Preliminaries}

We consider semigroups of matrices with entries in 1) the complex field $\cset$, 2) the semifield of nonnegative real
numbers with usual arithmetics, 3) the semifield of nonnegative numbers $\Rp(\max)$ with idempotent addition $a\oplus b:=\max(a,b)$ and the usual multiplication $ab:=a\times b$.

The latter example is known as the {\bf max-times semifield}; this semifield is isomorphic to the so-called tropical algebra or max-plus algebra. This case is one of our main motivations. As in the case of usual arithmetics, these semifields naturally extend
to matrices and vectors, giving rise to {\bf max-linear algebra}~\cite{But:10}, and
to the tropical convex geometry~\cite{GM-09} of max-algebraic subspaces of the nonnegative orthant $\Rp^n$. These subspaces are subsets of $\Rp^n$ closed under taking componentwise maximum of vectors, and multiplication by a nonnegative scalar. They are
also known as idempotent linear spaces over $\Rp(\max)$\cite{LMS-01,LMS-02}, or max cones~\cite{BSS}.

We give a general proof that in all these cases, any semigroup of pairwise commuting matrices has a common eigenvector. This can be seen as an extension of a recent result of Katz, Schneider, Sergeev~\cite{KSS}. We also
extend this result on existence of a common eigenvector to the case of nilpotent and quasinilpotent semigroup, in full analogy with nilpotent
groups. The notion of quasinilpotent semigroup was suggested by G.B. Shpiz.
See, e.g.,~\cite{IJK,LMS-02,Merlet}
for other recent results on the existence of a common eigenvector of  tropical matrix groups and semigroups.

In the sequel, the cases described above are considered simultaneously. More precisely, we will consider linear operators in spaces (semimodules)
$\cK^n=\underbrace{\cK\times\ldots\times\cK}_{n}$, where $\cK$ is one of
the following semifields: 1) the complex field $\cset$, or 2) the semifield of nonnegative numbers $\Rp$ with the usual arithmetics, 3) the max-times semifield
$\Rp(\max)$.

We will also consider {\bf subspaces} of $\cK^n$.
In the first case, subspaces of $\cK^n$ are just linear subspaces of $\cset^n$ in the usual meaning of the word. In the second case, subspaces of $\cK^n$ are convex cones in
$\Rp^n$; in the third case, they are the max cones~\cite{BSS} mentioned above.
In any of these cases, a subspace is called
{\bf nontrivial} if it contains a nonzero vector.
It will be called {\bf closed}
if it is closed in the sense of the usual Euclidean topology. A nontrivial subspace $W\subseteq\cK^n$ is called {\bf invariant} under the action of a linear
operator $A$, if $AW\subseteq W$, that is, if $Ax\in W$ for all $x\in W$. 

The eigenvector existence theorems (Lemma~\ref{l:eigenvector}) 
will be an important ingredient in the proofs below.
Note that the spectral theory of max-linear maps is well-known only for the case of nonnegative orthant, while we will need the eigenvector existence in an arbitrary invariant max cone. In the classical nonnegative algebra, such results follow either from the Brouwer fixed point theorem, or by extending Wielandt's proof of the Perron-Frobenius theorem to cones. 
In Section~\ref{s:shpiz} we show how these two techniques can be recast in 
the max-algebraic setting. Let us remark that these techniques have been developed in
the framework of more general tropical spaces and over idempotent semirings more
general than max-times in~\cite{LMS-01,LMS-02,Shpiz,Schauder}.

Proof of the following Lemma is elementary, but
it is recalled for the reader's convenience.

\begin{lemma}
\label{l:inters-nontriv}
Let $\{W_n\}$ be a sequence of closed nontrivial subspaces of $\cK^n$,
where $\cK\in\{\cset,\Rp,\Rp(\max)\}$, such that
$W_1\supseteq W_2\supseteq W_3\supseteq\ldots$. Then the intersection
$\cap_{i=1}^{\infty} W_i$ is a closed nontrivial space.
\end{lemma}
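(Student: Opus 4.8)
The plan is to exploit scale-invariance of each $W_i$ to reduce the problem to a statement about the unit sphere, where compactness does the work. First I would observe that each $W_i$, being a subspace in any of the three senses, is a cone: it is closed under multiplication by nonnegative scalars. Hence it is completely determined by its trace on the Euclidean unit sphere $S=\{x\in\cK^n:\|x\|=1\}$, where $\|\cdot\|$ is, say, the standard Euclidean norm on $\cset^n$ (regarded, in cases 2 and 3, as $\R^n$). Set $S_i=W_i\cap S$. Since each $W_i$ is closed and nontrivial, $S_i$ is a nonempty closed subset of the compact space $S$, and the inclusions $W_1\supseteq W_2\supseteq\cdots$ give $S_1\supseteq S_2\supseteq\cdots$.

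Next I would apply the finite intersection property (Cantor's intersection theorem): a nested sequence of nonempty compact sets has nonempty intersection, so $\bigcap_{i=1}^\infty S_i\neq\emptyset$. Pick $x$ in this intersection; then $x\neq 0$ (it lies on $S$) and $x\in W_i$ for every $i$, so $\bigcap_{i=1}^\infty W_i$ is nontrivial. Closedness of the intersection is immediate, since an arbitrary intersection of closed sets is closed.

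It remains to check that $\bigcap_{i=1}^\infty W_i$ is again a subspace in the appropriate sense — i.e. that the defining algebraic closure property is preserved under intersection. This is the only place where the three cases must be inspected, and it is routine in each: if $x,y\in\bigcap_i W_i$ then for every $i$ we have $x,y\in W_i$, hence $\alpha x+\beta y\in W_i$ (case 1, linear subspace), or $\alpha x+\beta y\in W_i$ for all $\alpha,\beta\geq 0$ (case 2, convex cone), or $\alpha x\oplus\beta y\in W_i$ for all $\alpha,\beta\geq 0$ (case 3, max cone); since this holds for all $i$, the combination lies in the intersection. Thus $\bigcap_i W_i$ is a closed nontrivial subspace.

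I do not expect a genuine obstacle here: the essential content is just Cantor's intersection theorem together with the homogeneity of subspaces, and the argument is uniform across the three semifields. The only mild subtlety is that one must use the cone property to pass to the sphere — without it, a nested sequence of nontrivial closed subsets could have trivial (empty-of-nonzero) intersection — but for subspaces this is automatic. One could alternatively phrase the compactness step by normalizing representatives $x_i\in W_i\setminus\{0\}$ with $\|x_i\|=1$, extracting a convergent subsequence $x_{i_k}\to x$, and noting that for each fixed $j$ all but finitely many $x_{i_k}$ lie in the closed set $W_j$, whence $x\in W_j$; this is the same proof with the diagonal-subsequence bookkeeping made explicit.
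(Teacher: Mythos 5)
Your proof is correct and is essentially identical to the paper's: both intersect each $W_i$ with the Euclidean unit sphere to obtain a nested sequence of nonempty compact sets and invoke Cantor's intersection theorem to produce a nonzero vector in $\cap_i W_i$. Your extra verification that the intersection is again a subspace (and your remark that the cone/homogeneity property is what makes the normalization to the sphere legitimate) is a welcome bit of care that the paper leaves implicit.
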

\begin{proof}
For each $i$, consider intersection of $W_i$ with the sphere
$\{y\colon ||y||=1\}$ where $||\cdot||$ is, for instance, the usual Euclidean
$2$-norm. Denote this intersection by $S_i$. Evidently $0\notin S_i$ and
all $S_i$ are non-empty and compact and satisfy $S_1\supseteq S_2\supseteq S_3\supseteq\ldots$, hence $\cap_{i=1}^{\infty} S_i$ is a compact
non-empty set and $\cap_{i=1}^{\infty} W_i$ is a closed nontrivial space.
\end{proof}

\begin{lemma}
\label{l:eigenvector}
Let $W$ be a closed subspace of $\cK^n$, invariant under the action of a linear operator
$A$, where $\cK\in\{\cset,\Rp,\Rp(\max)\}$.  Then $W$ contains an eigenvector of $A$.
\end{lemma}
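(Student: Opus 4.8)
The plan is to treat the three cases $\cK\in\{\cset,\Rp,\Rp(\max)\}$ in parallel, exploiting that $W$ is a closed subspace (linear subspace, convex cone, or max cone respectively) invariant under $A$. First I would reduce to the case where $W$ contains a vector with all coordinates nonzero in the support sense: restricting attention to the coordinate positions that can appear in the support of vectors of $W$, one may assume $W$ has a vector of full support, and $A$ maps $W$ into itself. In the complex case this is just the classical fact that a linear operator on a nonzero finite-dimensional space has an eigenvector, so the real content is the two nonnegative cases.

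For $\cK=\Rp$ with usual arithmetic, the strategy is the Brouwer fixed point approach hinted at in the preliminaries. Intersect $W$ with the standard simplex $\Delta=\{x\in W:\ \sum_i x_i=1\}$, which is nonempty, compact and convex. If $Ax\neq 0$ for all $x\in\Delta$, then $x\mapsto Ax/(\sum_i (Ax)_i)$ is a continuous self-map of $\Delta$, so Brouwer gives a fixed point, which is an eigenvector with positive eigenvalue. If instead $Ax=0$ for some $x\in\Delta$, then $x$ itself is an eigenvector with eigenvalue $0$. The alternative route, extending Wielandt's proof, would instead maximize a Collatz--Wielandt type functional $\mu(x)=\sup\{\lambda:\ Ax\cle\lambda x\}$ over $\Delta$ and argue the maximizer is an eigenvector; either is acceptable, and Section~\ref{s:shpiz} is cited as providing the max-algebraic recasting.

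For $\cK=\Rp(\max)$, I would run the max-algebraic analogue. Again pass to a max cone $W$ admitting a full-support vector and normalize by $W\cap\{x:\ \max_i x_i=1\}$, a compact set. One forms the Collatz--Wielandt number $\mu=\inf_{x\in W,\ x\neq 0}\ \max_i (Ax)_i/x_i$ (with the convention that a zero coordinate of $Ax$ over a zero coordinate of $x$ is harmless on the relevant support), shows the infimum is attained at some $\hat x$ in the normalized set by compactness and continuity, and then shows $A\hat x=\mu\hat x$: the inequality $A\hat x\cle\mu\hat x$ holds by definition of $\mu$, and if it were strict in some coordinate one decreases that coordinate of $\hat x$, staying in the max cone $W$ (here closedness/ invariance is used), and strictly decreases the functional, contradicting minimality. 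This is exactly the Wielandt-type argument, and it is the main obstacle: one must check carefully that the perturbed vector stays inside $W$ and that the functional is genuinely lower semicontinuous where needed, which is where closedness of $W$ and the finite-dimensional normalization do the work. In both nonnegative cases the eigenvector produced lies in $W$ because it is obtained as a limit/selection from $W$, which is closed.

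Since the detailed execution of these classical and max-algebraic fixed-point arguments is carried out in Section~\ref{s:shpiz}, here I would simply assemble the three cases and invoke that section for the nonnegative ones, concluding that $W$ contains an eigenvector of $A$ in every case.
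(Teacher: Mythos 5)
Your treatment of the complex case and of $\cK=\Rp$ with ordinary arithmetic is fine and matches the paper (classical linear algebra, respectively Brouwer on the compact convex slice $W\cap\{x:\sum_i x_i=1\}$, with the zero-eigenvalue escape clause). The gap is in the max-times case, and it sits exactly at the step you yourself flag as ``the main obstacle'': after finding a minimizer $\hat x$ of the Collatz--Wielandt functional, you propose to derive $A\hat x=\mu\hat x$ by decreasing a coordinate of $\hat x$ where the inequality $A\hat x\cle\mu\hat x$ is strict, ``staying in the max cone $W$.'' This is false for a general invariant max cone. A max cone is closed under componentwise maxima and scalar multiples, not under decreasing a single coordinate: e.g.\ for $W=\{(t,t):t\ge 0\}$ the vector obtained from $(1,1)$ by lowering one coordinate leaves $W$. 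The classical Wielandt perturbation works only because the ambient cone there is the whole orthant $\Rpn$; it does not transfer to an arbitrary closed invariant $W$, which is precisely the generality this lemma needs (the whole point, as the preliminaries stress, is eigenvector existence in an arbitrary invariant max cone, not just in $\Rpn$). There is also a secondary unaddressed issue: the functional $x\mapsto\max_i (Ax)_i/x_i$ blows up where coordinates of $x$ vanish, so attainment of the infimum on the normalized slice is not simply ``compactness and continuity.''

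The paper closes this gap with two genuinely different devices, either of which you would need. The Brouwer route does not apply Brouwer to the (non-convex!) slice $S=W\cap\{x:\sum_i x_i=1\}$ directly; it passes to $\conv(S)$ and composes the normalized action of $A$ with the nonlinear projector $P_W(y)=\sup\{x\in W: x\le y\}$, so that the fixed point of $\gamma\pi$ on $\conv(S)$ is forced back into $S\subseteq W$. The algebraic route first replaces $W$ by a \emph{minimal} closed invariant max cone (Zorn's lemma plus Lemma~\ref{l:inters-nontriv}); minimality is what substitutes for your perturbation step, since it forces the set $\{w:[v:A^{\otimes t}\otimes v]\cdot A^{\otimes t}\otimes w\le w\}$ to be all of $W$ and hence makes $[v:A\otimes v]$ independent of $v$ (Lemma~\ref{l:shpiz}); one then iterates $A$ from the greatest element of the normalized slice and passes to the monotone limit. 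Without one of these two ideas (the projector onto $W$, or minimality of $W$), your sketch does not yield an eigenvector inside $W$ in the max-times case.
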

\begin{proof}
The case of $\Rp(\max)$ will be treated in Section~\ref{s:shpiz}.

In the case when $A$ is a complex-valued matrix and $W$ is a
subspace in $\cset^n$, this result is standard. In the case when $A$
is a nonnegative matrix and $W$ is a closed invariant cone of $A$ in
$\Rpn$, the result follows by a standard application of the Brouwer
fixed point theorem. A more precise formulation is known as the
finite-dimensional Kre\u{\i}n-Rutman theorem or the Perron-Frobenius
theorem for cones, see Schneider~\cite{Sch-65} Theorem~0. 
\end{proof}

We conclude with the following facts, which are evident in all algebraic
structures that we
consider. Recall that for $\lambda_A\in\cK$, the set
$\{v\in\cK^n\colon Av=\lambda_Av\}$ is called
the eigenspace of $A$ associated with $\lambda_A$. Linear operators $A$ and $B$
commute if $AB=BA$, that is, if $A(Bv)=B(Av)$ for all $v\in\cK^n$.

\begin{lemma}
\label{l:invariant}
Let $W\subseteq \cK^n$ be a nonzero eigenspace of a linear operator $A$ associated with some
eigenvalue of $A$, where $\cK\in\{\cset,\Rp,\Rp(\max)\}$.
\begin{itemize}
\item[1.] $W$ is closed;
\item[2.] Let $B$ be a linear operator that commutes with $A$. Then
$BW\subseteq W$.  
\end{itemize}
\end{lemma}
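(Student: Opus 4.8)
The plan is to prove the two parts of Lemma~\ref{l:invariant} directly from the definitions, treating all three semifields $\cK\in\{\cset,\Rp,\Rp(\max)\}$ uniformly.

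For part 1, I would observe that if $W=\{v\in\cK^n\colon Av=\lambda_A v\}$ is the eigenspace of $A$ for $\lambda_A$, then $W$ is the preimage under the map $v\mapsto(Av,\lambda_A v)$ of the diagonal $\{(y,y)\colon y\in\cK^n\}$. In each of the three cases the map $v\mapsto Av$ is continuous on $\cK^n$ (a linear map on $\cset^n$, a nonnegative-matrix action on $\Rpn$, or a max-times matrix action on $\Rpn$, all of which are continuous in the Euclidean topology), and $v\mapsto\lambda_A v$ is continuous as well. The diagonal is closed, so $W$ is closed; and it is nonzero by hypothesis, hence a closed subspace in the relevant sense (linear subspace, convex cone, or max cone), as required.

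For part 2, let $B$ commute with $A$ and take any $v\in W$, so $Av=\lambda_A v$. Then $A(Bv)=B(Av)=B(\lambda_A v)=\lambda_A(Bv)$, where the last equality uses that multiplication by the scalar $\lambda_A$ commutes with the linear operator $B$ in each of the three algebraic structures (this is just homogeneity: $B(\lambda_A v)=\lambda_A (Bv)$ holds for linear maps over $\cset$, for cone-linear maps over $\Rp$, and for max-linear maps over $\Rp(\max)$). Hence $Bv$ is again an eigenvector of $A$ for $\lambda_A$, i.e.\ $Bv\in W$, so $BW\subseteq W$.

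Neither part presents a genuine obstacle; the only point requiring a moment's care is that the notion of ``linear operator'' and ``closed subspace'' varies with $\cK$, so I would make sure the continuity of $v\mapsto Av$ and the homogeneity identity $B(\lambda v)=\lambda(Bv)$ are each checked (or cited as evident) in all three cases, exactly as the phrase ``evident in all algebraic structures that we consider'' in the statement suggests. One small caveat worth noting is that when $\lambda_A=0$ the eigenspace could in principle be trivial, but this is excluded by the hypothesis that $W$ is a nonzero eigenspace, so no separate treatment is needed.
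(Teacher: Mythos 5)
Your proof is correct and follows essentially the same route as the paper: part 1 by continuity of the operator and of scalar multiplication (you merely spell out the preimage-of-the-diagonal argument that the paper leaves implicit), and part 2 by the identical computation $A(Bv)=B(Av)=B(\lambda_A v)=\lambda_A(Bv)$. No gaps.
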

\begin{proof}
The first part follows from the continuity of linear operators in all spaces 
$\cK^n$ that we consider. For the second part, let $v\in\cK^n$ satisfy
$Av=\lambda_Av$ for some $\lambda_A\in\cK$. 
Then $A(Bv)=B(Av)=B(\lambda_A v)=\lambda_A (Bv)$, thus the eigenspace of $A$ associated
with $A$ is invariant under $B$.
\end{proof}

\if{
\begin{lemma}
\label{l:invariant}
Let $V\subseteq\cK^n$, where $\cK\in\{\cset,\Rp,\Rp(\max)\}$, be the intersection of the eigenspace
of a linear operator $A$, associated with some
eigenvalue, with a closed subspace $W$ of $\cK^n$ invariant under the
action of $A$. Then
\begin{itemize}
\item[1.] $V$ is closed;
\item[2.] Let $B$ be a linear operator that commutes with $A$ and $W$ is invariant under $B$, i.e. $BW\subseteq W$. Then
$V$ is also invariant under $B$.
\end{itemize}
\end{lemma}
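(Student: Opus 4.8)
The plan is to deduce both claims from the two elementary facts already exploited for the special case $W=\cK^n$: an eigenspace is a closed set, and it is carried into itself by every commuting operator. Denote by $\lambda_A\in\cK$ the eigenvalue in question and set $E=\{v\in\cK^n\colon Av=\lambda_Av\}$, so that $V=W\cap E$; the argument will be uniform over the three choices of $\cK$.

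For part~1, I would first check that $E$ is closed in $\cK^n$. It is cut out by the $n$ scalar relations $(Av)_i=\lambda_Av_i$, $i=1,\dots,n$, and in each of the three structures the functions $v\mapsto(Av)_i$ and $v\mapsto\lambda_Av_i$ are continuous --- a max-times linear map $v\mapsto\max_j A_{ij}v_j$ is continuous exactly as its classical counterpart is --- so $E$, being the common locus of finitely many equalities between continuous functions, is closed. Since $W$ is closed by hypothesis, $V=W\cap E$ is then closed as well. (If in addition one wants $V$ nontrivial, it suffices to take for $\lambda_A$ the eigenvalue furnished by Lemma~\ref{l:eigenvector} applied to the restriction of $A$ to the closed invariant subspace $W$.)

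For part~2, let $v\in V$ be arbitrary. Then $v\in W$, hence $Bv\in W$ since $BW\subseteq W$ by hypothesis; and $A(Bv)=B(Av)=B(\lambda_Av)=\lambda_A(Bv)$, where the first equality is the commutation $AB=BA$ and the last is homogeneity of $B$ under scalar multiplication (in max-times, $\max_j B_{ij}(\lambda v_j)=\lambda\,\max_j B_{ij}v_j$), a property shared by linear operators in all three settings. Hence $Bv\in E$, and together with $Bv\in W$ this gives $Bv\in W\cap E=V$, i.e.\ $BV\subseteq V$. I do not anticipate a real obstacle: the only two points that need a sentence rather than being purely formal are the continuity of max-times linear maps (used for closedness of $E$) and the homogeneity of $B$ (used in part~2), and both are immediate.
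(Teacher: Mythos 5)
Your proof is correct and follows essentially the same route as the paper's: closedness of the eigenspace from continuity of the (max-)linear maps, and invariance from the computation $A(Bv)=B(Av)=B(\lambda_A v)=\lambda_A(Bv)$, with the only addition being the routine observation that $Bv$ also stays in $W$ because $BW\subseteq W$. No gaps.
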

}\fi

\section{Main results}
\label{s:main}

We say that a semigroup $\cS$ of linear operators acting in a subspace $V\subseteq\cK^n$ has
an eigenvector $v$, if for each $A\in\cS$ there is a value
$\lambda_A\in\cK$ such that $Av=\lambda_A v$, and $v\in V$ is nonzero. We say that
$\cS$ is commutative if $AB=BA$ for all $A,B\in\cS$.

\begin{theorem}
\label{t:comm-eig}
Let $\cS$ be a commutative semigroup of linear operators leaving a closed subspace $V\subseteq\cK^n$ invariant, where $\cK\in\{\cset,\Rp,\Rp(\max)\}$. Then
$\cS$ has an eigenvector $v\in V$.
\end{theorem}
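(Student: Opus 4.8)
The plan is to reduce the statement for a general commutative semigroup to the finitely generated case and then to a single operator, using the three lemmas above as the engine. The key observation is that if $v \in V$ is a common eigenvector of operators $A_1,\dots,A_k$, then by Lemma~\ref{l:invariant} the intersection of the corresponding eigenspaces with $V$ is a closed subspace invariant under every operator commuting with all the $A_i$; in particular it is invariant under $A_{k+1}$, so Lemma~\ref{l:eigenvector} produces a common eigenvector of $A_1,\dots,A_{k+1}$ inside it. Thus for a finitely generated commutative semigroup one argues by induction on the number of generators, the base case being exactly Lemma~\ref{l:eigenvector} applied to $A_1$ on $V$.

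For the general (possibly uncountable) commutative semigroup $\cS$, I would pass to the family of \emph{common eigenspaces of finite subsets}. For a finite subset $F \subseteq \cS$, let $V_F$ denote the set of $v \in V$ that are common eigenvectors of all operators in $F$, together with $0$; by the finite case $V_F$ is a nontrivial closed subspace of $\cK^n$. The family $\{V_F\}$ is downward directed: $V_{F_1} \cap V_{F_2} \supseteq V_{F_1 \cup F_2}$, and each $V_F$ is again a common eigenspace, so it suffices to show the whole family has nonempty intersection (minus $0$). Since we are in finite dimension $n$, among all the $V_F$ choose one, say $V_{F_0}$, of minimal dimension; then for any finite $F$ we have $V_{F_0 \cup F} \subseteq V_{F_0}$ is nontrivial and closed, hence by minimality $\dim V_{F_0 \cup F} = \dim V_{F_0}$, and — because each $V_G$ is a genuine subspace in the relevant category (linear subspace, convex cone, or max cone) of the same finite dimension contained in $V_{F_0}$ — one concludes $V_{F_0 \cup F} = V_{F_0}$. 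Hence $V_{F_0} = V_F \cap V_{F_0} $ for all $F$, i.e. every operator of $\cS$ acts as a scalar on all of $V_{F_0}$, and any nonzero $v \in V_{F_0}$ is the desired common eigenvector.

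The step I expect to be the main obstacle is the dimension-count argument in the third category, the max cones: ``dimension'' for max cones is not as rigid as linear dimension, so the claim ``a nontrivial closed max subcone of $V_{F_0}$ of the same dimension must equal $V_{F_0}$'' needs care, or must be replaced by a different well-ordering / minimality argument. A safe alternative that sidesteps this is to invoke Lemma~\ref{l:inters-nontriv} directly: it is enough to find a \emph{countable} decreasing chain $V_{F_1} \supseteq V_{F_2} \supseteq \cdots$ whose intersection is already a common eigenspace for all of $\cS$, and such a chain exists because the dimensions $\dim V_{F_1} \geq \dim V_{F_2} \geq \cdots$ are non-increasing nonnegative integers, so they stabilize after finitely many steps; once they stabilize at $V_{F_k}$, adjoining any further finite set $F$ cannot strictly shrink it, and (arguing category by category, with the Euclidean/topological notion of dimension to handle the max-cone case uniformly) $V_{F_k}$ is invariant under every element of $\cS$. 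Then Lemma~\ref{l:inters-nontriv} guarantees $V_{F_k}$ is closed and nontrivial, and Lemma~\ref{l:eigenvector} (or rather the fact that each $A \in \cS$ already restricts to a scalar on it) finishes the proof.
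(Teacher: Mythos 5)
Your reduction to the finitely generated case is sound: given a common eigenvector of $A_1,\dots,A_k$ with eigenvalues $\lambda_1,\dots,\lambda_k$, the set $\{w\in V\colon A_iw=\lambda_iw,\ i=1,\dots,k\}$ is a nontrivial closed subspace invariant under $A_{k+1}$, and Lemma~\ref{l:eigenvector} applies. The passage to an arbitrary commutative semigroup, however, has two genuine gaps. First, your $V_F$ --- ``the set of $v\in V$ that are common eigenvectors of all operators in $F$, together with $0$'' --- is not a subspace: already for a single operator the union of its eigenspaces over all eigenvalues is not closed under addition (or under $\oplus$). You must fix a tuple of eigenvalues $(\lambda_A)_{A\in F}$ for each $F$, and then the directedness $V_{F_1\cup F_2}\subseteq V_{F_1}\cap V_{F_2}$ holds only if the tuples chosen for different finite sets are coherent, which you have not arranged and which is not automatic (an operator can have several eigenvalues admitting eigenvectors in $V$).

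Second, and more seriously, the dimension-stabilization step fails not only for max cones but already for ordinary convex cones in $\Rp^n$: a closed convex cone of topological dimension $d$ can properly contain closed convex subcones of the same dimension $d$ (nested angular sectors in $\Rp^2$ give a strictly decreasing chain, all of dimension $2$). So ``nontrivial closed subcone of the same dimension must equal the ambient cone'' is false in two of the three categories, and your ``safe alternative'' relies on exactly the same false implication when it claims that once the dimensions stabilize the spaces themselves stabilize. Only the case $\cK=\cset$ is rescued by dimension counting. The paper avoids both problems by working top-down: Zorn's Lemma applied to the family of \emph{all} nontrivial closed $\cS$-invariant subspaces of $V$ (chains have nontrivial intersection by the compactness argument of Lemma~\ref{l:inters-nontriv}) produces a minimal such subspace $W$; then for each $A\in\cS$ the intersection of $W$ with an eigenspace of $A$ is a nontrivial closed invariant subspace of $W$ by Lemmas~\ref{l:eigenvector} and~\ref{l:invariant}, hence equals $W$ by minimality, so every nonzero vector of $W$ is a common eigenvector. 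This sidesteps both the eigenvalue bookkeeping and any dimension count; if you want to salvage your bottom-up approach, you would need to replace the dimension argument by a minimality argument of this kind anyway.
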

\begin{proof}
Let us consider the collection of all closed nontrivial subspaces of $V$ invariant
under all operators of $\cS$. Such subspaces will be called invariant by abuse of terminology. This collection contains $V$, so it is non-empty.
For any $\cK\in\{\cset,\Rp,\Rp(\max)\}$, the intersection of any chain of closed invariant
spaces is a closed invariant space, and it is nontrivial by
Lemma~\ref{l:inters-nontriv}. Then we can apply Zorn's Lemma to get a nontrivial minimal (under inclusion) closed invariant space $W$. Consider a subspace of $W$ consisting of all
eigenvectors of a matrix $A\in\cS$ associated with an eigenvalue $\lambda_A$.   Denote this
subspace by $W'$. By Lemma~\ref{l:eigenvector} $W'$ contains a nonzero vector. Observe that $W'$ is the intersection of $W$ with the eigenspace
of $A$ associated with $\lambda_A$. By Lemma~\ref{l:invariant}, $W'$ is a closed invariant
subspace of $W$, and the minimality implies $W=W'$. As we took
an arbitrary $A\in\cS$, it follows that for all $A\in\cS$ the subspace $W$ contains (besides the zero vector) only eigenvectors of $A$ associated with some eigenvalue $\lambda_A$.
All these eugenvectors are common eigenvectors of all matrices in $\cS$.
\end{proof}

For a semigroup $\cS$, define its subsemigroup $\cS^{(k)}$ consisting of all
$A\in\cS$ that can be represented as $B_1\cdot\ldots\cdot B_k$ with
$B_1,\ldots,B_k\in\cS$. Indeed, this is a subsemigroup since
$B_1\cdot\ldots\cdot B_{k}C_1\cdot\ldots\cdot C_{k}$ can be represented, for instance,
as $(B_1\cdot\ldots\cdot B_{k}C_1)\cdot\ldots\cdot C_{k}$.
For a similar reason, we have $\cS^{(k)}\subseteq\cS^{(k-1)}$ for all $k>1$.
$\cS$ is called {\it quasinilpotent} if $\cS^{(k)}$ is
a commutative semigroup for some $k$. Recall that a semigroup $\cS$ with a zero element $0$
is {\it nilpotent} if $\cS^{(k)}$ = $\{0\}$ for some $k$. Of course, every commutative or nilpotent
semigroup is quasinilpotent.

\begin{theorem}
\label{t:quas-eig}
Let $\cS$ be a quasinilpotent (or nilpotent) semigroup of linear operators leaving a closed subspace $V\subseteq\cK^n$ invariant, where $\cK\in\{\cset,\Rp,\Rp(\max)\}$. Then
$\cS$ has an eigenvector $v\in V$.
\end{theorem}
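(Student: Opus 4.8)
The plan is to reduce the quasinilpotent case to the commutative case (Theorem~\ref{t:comm-eig}) by descending through the chain $\cS = \cS^{(1)} \supseteq \cS^{(2)} \supseteq \cdots \supseteq \cS^{(k)}$. Since $\cS$ is quasinilpotent, $\cS^{(k)}$ is commutative for some $k$; in the nilpotent case $\cS^{(k)} = \{0\}$, which we may treat as a degenerate (trivially commutative) subsemigroup, so it suffices to argue the quasinilpotent case and note that when $\cS^{(k)} = \{0\}$ the construction below produces an eigenvector of the whole semigroup directly from any nonzero vector the preceding stages supply. The key observation is that each $\cS^{(j)}$ is a \emph{two-sided ideal-like} object inside $\cS^{(j-1)}$ in the weak sense that $A \cdot \cS^{(j-1)} \subseteq \cS^{(j)}$ and $\cS^{(j-1)} \cdot A \subseteq \cS^{(j)}$ for every $A \in \cS^{(j-1)}$; more precisely $\cS^{(j)} \cdot \cS^{(j-1)} \subseteq \cS^{(j)}$ and $\cS^{(j-1)} \cdot \cS^{(j)} \subseteq \cS^{(j)}$, and in fact $\cS^{(j)}$ is itself a semigroup by the argument already given in the text.

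First I would, by Theorem~\ref{t:comm-eig} applied to the commutative semigroup $\cS^{(k)}$ acting on the closed invariant subspace $V$ (invariance of $V$ under $\cS^{(k)} \subseteq \cS$ is immediate), obtain the set $W_k$ of all common eigenvectors of $\cS^{(k)}$ in $V$, together with the zero vector; this is nonempty and, being an intersection of eigenspaces of the operators in $\cS^{(k)}$, is by Lemma~\ref{l:invariant}(1) a closed subspace of $V$. Next I claim $W_k$ is invariant under all of $\cS^{(k-1)}$: if $v \in W_k$ and $B \in \cS^{(k-1)}$, then for any $A \in \cS^{(k)}$ we have $AB = B A'$ for a suitable rewriting showing $A B \in \cS^{(k)}$, hence $A(Bv)$ equals $\lambda_{A'} (Bv)$ for the appropriate eigenvalue — the cleanest route is to note $B\cdot\cS^{(k)}\subseteq\cS^{(k)}$ and $\cS^{(k)}$ commutative, so for $A\in\cS^{(k)}$ there is $A''\in\cS^{(k)}$ with $AB = BA''$, whence $A(Bv) = B(A''v) = B(\lambda_{A''}v) = \lambda_{A''}(Bv)$, so $Bv \in W_k$. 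Thus $W_k$ is a closed subspace of $\cK^n$ invariant under the semigroup $\cS^{(k-1)}$.

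Now I would iterate: having produced a closed nonzero subspace $W_j$ invariant under $\cS^{(j)}$ and under $\cS^{(j-1)}$, I restrict attention to $\cS^{(j-1)}$ acting on $W_j$ — but $\cS^{(j-1)}$ need not be commutative, so instead I pass to $\cS^{(j-1)}$ directly and use the same ideal-type trick one level down. The honest formulation is a downward induction on $j$ from $k$ to $1$ proving the statement: \emph{for each $j$, the semigroup $\cS^{(j)}$ has a common eigenvector in $V$, and moreover the set of its common eigenvectors in $V$ (with $0$) contains a closed nonzero subspace invariant under $\cS^{(j-1)}$.} The base case $j = k$ is handled above. For the inductive step, given $W_{j}$ closed, nonzero, invariant under $\cS^{(j-1)}$, apply Zorn's Lemma as in the proof of Theorem~\ref{t:comm-eig} — using Lemma~\ref{l:inters-nontriv} for chains — to obtain a minimal closed nonzero $\cS^{(j-1)}$-invariant subspace $W \subseteq W_j$; then for any $A \in \cS^{(j-1)}$ the set of eigenvectors of $A$ in $W$ for a fixed eigenvalue $\lambda_A$ (which is nonzero by Lemma~\ref{l:eigenvector}) is, by Lemma~\ref{l:invariant}, closed and invariant under every $B \in \cS^{(j-1)}$ that commutes with $A$ — but that is not all of $\cS^{(j-1)}$, which is the gap.

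Here is where the real argument must go, and it is the main obstacle: commutativity is only available inside $\cS^{(k)}$, so the Zorn/minimality argument of Theorem~\ref{t:comm-eig} cannot be rerun verbatim at levels $j < k$. The resolution is to \emph{not} iterate the minimality construction but to exploit that $\cS^{(j)}$ is an absorbing set: take $W$ a minimal closed nonzero subspace invariant under the \emph{whole} $\cS = \cS^{(1)}$ (obtained once, at the outset, by Zorn plus Lemma~\ref{l:inters-nontriv}); then $\cS^{(k)}$ restricted to $W$ is a commutative semigroup leaving $W$ invariant, so by Theorem~\ref{t:comm-eig} it has common eigenvectors in $W$, whose span-closure $W'$ is a closed nonzero subspace of $W$; and $W'$ is invariant under all of $\cS$ because for $B \in \cS$ and $A \in \cS^{(k)}$ one has $AB \in \cS^{(k)}$ (as $\cS^{(k)}\cdot\cS = \cS^{(k+1)}\subseteq\cS^{(k)}$ after absorbing one more factor — indeed $A\in\cS^{(k)}$ means $A = C_1\cdots C_k$, so $AB = C_1\cdots(C_kB)\in\cS^{(k)}$), and similarly $BA\in\cS^{(k)}$, so the commuting-operator argument of Lemma~\ref{l:invariant} applies after replacing $A$ by the element $A'\in\cS^{(k)}$ with $AB = BA'$ (which exists since $B\cdot\cS^{(k)}\subseteq\cS^{(k)}$ and $\cS^{(k)}$ is commutative, giving $AB = A(BA'')$ type manipulations collapsing to $BA'$). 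Hence $W' \subseteq W$ is closed, nonzero, $\cS$-invariant, so minimality of $W$ forces $W' = W$: every nonzero vector of $W$ is a common eigenvector of $\cS^{(k)}$. Finally, for $B \in \cS$ and any nonzero $w \in W$, $Bw \in W$ is again a common eigenvector of $\cS^{(k)}$; one then shows $w$ itself is an eigenvector of $B$ by a rank/invariance argument — e.g. the map $B|_W$ commutes with the commutative family $\cS^{(k)}|_W$ acting on $W$ on which $\cS^{(k)}$ acts by scalars, so $B|_W$ preserves each joint eigenspace decomposition piece, and minimality of $W$ forces $W$ to be a single such piece of dimension forcing $B|_W$ scalar, or more directly: apply Theorem~\ref{t:comm-eig} to the commutative semigroup generated by $\cS^{(k)}|_W$ together with... — cleaner is to observe $W$ is a minimal $\cS$-invariant space on which the commutative ideal $\cS^{(k)}$ acts by scalars, pick $B \in \cS$, and note $\langle B \rangle \cup \cS^{(k)}$ restricted to $W$ is commutative (since $B$ commutes with each $A \in \cS^{(k)}$ modulo scalars, hence exactly, as scalars are central), so Theorem~\ref{t:comm-eig} gives a common eigenvector and minimality spreads it to all of $W$. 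That last commutativity claim — that $B$ commutes with $\cS^{(k)}|_W$ because each $A|_W$ is a scalar — is the crux and the step I expect to require the most care; everything else is assembling Zorn's Lemma, Lemma~\ref{l:inters-nontriv}, Lemma~\ref{l:eigenvector}, Lemma~\ref{l:invariant}, and Theorem~\ref{t:comm-eig} exactly as in the commutative proof.
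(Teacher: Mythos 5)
There is a genuine gap --- in fact several --- and the route you chose is much harder than the one the paper takes. First, your set $W_k$ (``all common eigenvectors of $\cS^{(k)}$ in $V$, together with $0$'') is not an intersection of eigenspaces and is not a subspace: two common eigenvectors associated with \emph{different} eigenvalue assignments $A\mapsto\lambda_A$ need not combine into a common eigenvector, so neither $W_k$ nor the later ``span-closure $W'$ of the common eigenvectors'' is a space consisting of common eigenvectors, and Lemma~\ref{l:invariant}(1) does not apply to it. Second, the step you lean on repeatedly --- that for $B\in\cS^{(k-1)}$ (or $B\in\cS$) and $A\in\cS^{(k)}$ there is some $A'\in\cS^{(k)}$ with $AB=BA'$ --- is unjustified and false in general: you do know $AB\in\cS^{(k)}$ and $BA\in\cS^{(k)}$, and that elements of $\cS^{(k)}$ commute \emph{with each other}, but nothing lets you pull $B$ past $A$. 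Third, your final fallback (on a minimal $\cS$-invariant $W$, each $A\in\cS^{(k)}$ acts as a scalar, so for each $B\in\cS$ the semigroup generated by $B|_W$ and $\cS^{(k)}|_W$ is commutative) has two problems: the premise that $\cS^{(k)}$ acts by scalars on $W$ requires the eigenspaces of $A\in\cS^{(k)}$ to be invariant under all of $\cS$, which is exactly the commutation you do not have; and even granting it, applying Theorem~\ref{t:comm-eig} to $\langle B\rangle\cup\cS^{(k)}|_W$ for each $B$ separately yields an eigenvector of each individual $B$, not a \emph{common} eigenvector of the noncommutative family $\cS$. You also never carry out the descent through the intermediate levels $\cS^{(k-1)},\dots,\cS^{(1)}$.

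The idea you are missing is that no further Zorn/minimality construction is needed for the descent: a single common eigenvector of $\cS^{(k)}$ is already (essentially) a common eigenvector of $\cS^{(k-1)}$, by dividing eigenvalues. Concretely, let $u\in V$ be a common eigenvector of $\cS^{(k)}$. If some $A\in\cS^{(k)}$ has $Au=\lambda u$ with $\lambda\neq 0$, then for any $B\in\cS^{(k-1)}$ the product $BA$ lies in $\cS^{(k)}$, so $BAu=\mu u$ for some $\mu$; substituting $Au=\lambda u$ gives $Bu=\lambda^{-1}\mu u$, so the \emph{same} vector $u$ is an eigenvector of every $B\in\cS^{(k-1)}$. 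If instead $Au=0$ for all $A\in\cS^{(k)}$, then either $Bu=0$ for all $B\in\cS^{(k-1)}$ (done, eigenvalue $0$), or some $B'u\neq 0$, in which case $B'u$ is annihilated by every $B\in\cS^{(k-1)}$ because $BB'\in\cS^{(k)}$, so $B'u$ is the desired common eigenvector. Iterating this from $k=t$ (where $\cS^{(t)}$ is commutative and Theorem~\ref{t:comm-eig} applies) down to $k=2$ finishes the proof. Your instinct that $\cS^{(j)}$ behaves like an absorbing ideal in $\cS^{(j-1)}$ is the right one; the paper exploits it through eigen\emph{values} rather than through invariant subspaces, which is what makes the argument close.
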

\begin{proof}
Let $\cS^{(t)}$, for some $t\geq 1$, be a commutative semigroup (or let
$\cS^{(t)}=\{0\}$ in the nilpotent case). 
By Theorem~\ref{t:comm-eig}, $\cS^{(t)}$ has an eigenvector.
It suffices to prove that for $k>1$, if $\cS^{(k)}$ has an eigenvector then so does $\cS^{(k-1)}$, and then a straightforward induction can be applied.

Let $u$ be an eigenvector of $\cS^{(k)}$ and suppose that there exists $A\in\cS^{(k)}$,
for which this eigenvector is associated with a nonzero eigenvalue $\lambda$.
For each $B\in\cS^{(k-1)}$, we have
$BA\in\cS^{(k)}$ (as $A\in\cS$), and hence $BAu=\mu u$ for some $\mu$. Substituting $Au=\lambda u$
we obtain $Bu=\lambda^{-1}\mu u$, so $u$ is also an eigenvector of $\cS^{(k-1)}$.

Otherwise, suppose that there is $u\in V$, $u\neq 0$, such that $Au$ is zero for all
$A\in\cS^{(k)}$. If we also have $Bu$ zero for all $B\in\cS^{(k-1)}$ then $u$ is an
eigenvector of $\cS^{(k-1)}$ (with a common eigenvalue equal to zero). Otherwise, take
$B'\in\cS^{(k-1)}$ such that $B'u$ is nonzero, but then $BB'u$ is still zero for all 
$B\in\cS^{(k-1)}$ since $BB'\in\cS^{(k)}$.
So $B'u$ is an eigenvector of $\cS^{(k-1)}$ (with a common eigenvalue equal to zero).

We proved the claim in all possible cases.
\end{proof}

\noindent {\bf Remark.}
It is well known that a connected nilpotent or solvable complex matrix Lie group has a common
eigenvector (the Lie and Engel theorems, see, e.g. \cite{Serre}). For abstract nilpotent (but not for solvable)
groups of linear continuous operators in a tropical linear Archimedean space the corresponding result
was proved in \cite{LMS-02}.

\section{Proofs of Lemma~\ref{l:eigenvector} for max cones}
\label{s:shpiz}

In this section we give two proofs of Lemma~\ref{l:eigenvector} for max cones in
$\Rpn$. The first proof is based on Brouwer's fixed point theorem following the
idea of the proof of~\cite{Schauder} Theorem~1, and the second
proof can be seen as a max-algebraic analogue of Wielandt's proof
of the Perron-Frobenius theorem being an adaptation of the argument of Shpiz~\cite{Shpiz}
Theorem~3. 

\subsection{Application of Brouwer's fixed-point theorem}
\label{ss:brouwer-proof}

Let $W$ be a closed max cone in $\Rpn$ and let 
\begin{equation}
\label{sdef}
S=W\cap \{x\mid \sum_{i=1}^n x_i=1\}.
\end{equation}
Here, $\sum$ is the ordinary sum (not to confuse with 
the componentwise maximum). Denote by $\conv(S)$ the convex hull of
$S$, i.e., the smallest conventionally convex set containing $S$. As it is known from 
convex analysis, this is the same as the set of all convex combinations 
$\sum_{i=1}^m \mu_i x^i$, where $x^1,\ldots,x^m\in S$, all $\mu_i$ are nonnegative and satisfy $\sum_{i=1}^m \mu_i=1$. Since $W$ is closed, $S$ is compact, and $\conv(S)$ is a compact convex subset of $\{x\mid \sum_{i=1}^n x_i=1\}$.

A nonlinear projector $P_W\colon\Rpn\to W$ (see~\cite{CGQS,LMS-01}) can be defined by
\begin{equation}
\label{e:proj-def}
P_W(y):=\sup\{x\in W\mid x\leq y\},
\end{equation}
where $\sup$ denotes the componentwise supremum. As $W$ is closed, this supremum is reached.
In particular, $P_W(x)=x$ for all $x\in W$.

\begin{lemma}
\label{l:proj}
$P_W$ is a continuous operator on $\Rpn$ and 
$P_W(z)\neq 0$ for any nonzero $z\in\conv(S)$.
\end{lemma}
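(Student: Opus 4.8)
The plan is to establish the two assertions separately: first the continuity of $P_W$ on $\Rpn$, and then the nonvanishing of $P_W$ on $\conv(S)$.

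For continuity, I would first record the monotonicity of $P_W$: if $y\le y'$ componentwise, then $\{x\in W\mid x\le y\}\subseteq\{x\in W\mid x\le y'\}$, so $P_W(y)\le P_W(y')$. Next I would prove Lipschitz-type control. The key observation is that $P_W$ is homogeneous ($P_W(\lambda y)=\lambda P_W(y)$ for $\lambda\ge 0$, since $W$ is a max cone) and that adding a max-multiple of $y$ to $y$ itself behaves well. Concretely, I would show that for $y,y'\in\Rpn$ with $\|y-y'\|_\infty\le\varepsilon$, one has $y'\le y+\varepsilon\mathbf{1}$ and hence, using monotonicity together with the fact that $x\le y+\varepsilon\mathbf{1}$ with $x\in W$ can be compared to $P_W(y)$, deduce a bound like $P_W(y')\le P_W(y)+\varepsilon\mathbf{1}$ (and symmetrically), giving $\|P_W(y)-P_W(y')\|_\infty\le\varepsilon$. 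Care is needed because $W$ is a cone in $\Rpn$ rather than a translate-invariant subspace, so the additive shift by $\varepsilon\mathbf{1}$ does not obviously map $W$ to $W$; I would instead argue via the explicit formula $P_W(y)_i=\sup\{x_i : x\in W,\ x\le y\}$ and estimate each coordinate directly, or invoke the known residuation/Galois-connection description of $P_W$ from \cite{CGQS,LMS-01}. This Lipschitz estimate is the main technical obstacle, and it is where I expect to spend most of the effort.

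For the nonvanishing claim, suppose $z\in\conv(S)$ is nonzero. Write $z=\sum_{i=1}^m\mu_i x^i$ with $x^i\in S\subseteq W$, $\mu_i\ge 0$, $\sum\mu_i=1$. Since $z\ne 0$ and the $x^i$ have nonnegative coordinates, there is some index $k$ with $\mu_k>0$, and then $z\ge\mu_k x^k$ componentwise. Because $\mu_k x^k\in W$ (as $W$ is a max cone closed under nonnegative scaling) and $\mu_k x^k\le z$, the definition of $P_W$ gives $P_W(z)\ge\mu_k x^k$. Finally $x^k\in S$ means $\sum_i x^i_k=1$, so $x^k\ne 0$, hence $\mu_k x^k\ne 0$, and therefore $P_W(z)\ne 0$.

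A minor point to verify along the way is that the supremum in \eqref{e:proj-def} is indeed attained and lies in $W$: the set $\{x\in W\mid x\le y\}$ is bounded (coordinatewise by $y$) and nonempty (it contains $0$), its componentwise supremum $s$ satisfies $s\le y$, and $s\in W$ because $W$ is closed under componentwise maxima of pairs and closed in the Euclidean topology, so it contains the supremum of any bounded subset. This is already asserted in the text preceding the lemma, so I would simply cite it.
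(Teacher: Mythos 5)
Your treatment of the second assertion is correct and essentially coincides with the paper's: the paper takes as a minorant the componentwise maximum of all the vectors $\mu_i x^i$ (which lies in $W$ because $W$ is a max cone), whereas you take a single term $\mu_k x^k$ with $\mu_k>0$; either choice is a nonzero element of $W$ lying below $z$, hence below $P_W(z)$. Your closing remark on the attainment of the supremum in \eqref{e:proj-def} is also fine.

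The continuity part is where the proposal has a genuine gap. The paper does not prove continuity; it cites \cite{CGQS}, Theorem~3.11, which is also your stated fallback, and that is acceptable. But the direct argument you put forward as your main route would fail: the additive nonexpansiveness you aim for, namely that $y'\le y+\varepsilon\mathbf{1}$ implies $P_W(y')\le P_W(y)+\varepsilon\mathbf{1}$ ``and symmetrically'', is false for max cones. Take $W=\{\lambda(\delta,1)\colon \lambda\ge 0\}\subseteq\Rp^{2}$ with $0<\delta<1$, and let $y=(\delta,1)$, $y'=(\delta-\varepsilon,1)$ with $0<\varepsilon<\delta$. Then $P_W(y)=(\delta,1)$ while $P_W(y')=(1-\varepsilon/\delta)(\delta,1)$, so $\|P_W(y)-P_W(y')\|_\infty=\varepsilon/\delta$, which is not bounded by $\varepsilon=\|y-y'\|_\infty$; in particular the symmetric bound $P_W(y)\le P_W(y')+\varepsilon\mathbf{1}$ fails in the second coordinate. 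The estimate that does hold is multiplicative rather than additive: if $\alpha^{-1}y\le y'\le\alpha y$ with $\alpha\ge 1$, then $\alpha^{-1}P_W(y)\le P_W(y')\le\alpha P_W(y)$, since $x\in W$ with $x\le y'\le\alpha y$ gives $\alpha^{-1}x\in W$ and $\alpha^{-1}x\le y$. This yields continuity at every point with all coordinates positive, but multiplicative neighbourhoods do not control Euclidean neighbourhoods near the boundary of $\Rpn$, so boundary points require a separate argument --- which is exactly what \cite{CGQS}, Theorem~3.11 supplies. So either cite that result, as the paper does, or replace your Lipschitz step by the multiplicative estimate together with an explicit treatment of the boundary; the coordinatewise estimate you hoped for does not exist in the form you describe.
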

\begin{proof}
A proof of continuity of $P_W$ 
can be found, for instance, in~\cite{CGQS} Theorem~3.11. To prove the second part of the claim, it is sufficient to find,
for each nonzero $z\in\conv(S)$, a vector $y\in W$ with $0\neq y\leq z$. For this, recall that
$z$ can be represented as $z=\sum_{i=1}^m \mu_i x^i$ for some $x^1,\ldots,x^m\in W$, where all
$\mu_i$ are nonnegative and satisfy $\sum_{i=1}^m \mu_i=1$. Take for $y$ the componentwise maximum of $\mu_1 x^1,\ldots,\mu_m x^m$, then $y\leq z$, $y\neq 0$ and $y\in W$.   
\end{proof}

\medskip\noindent{\bf Proof of Lemma~\ref{l:eigenvector}} 
We can assume that $A\otimes x\neq 0$ for all $x\in W$, otherwise there exists an eigenvector
of $A$ associated with the zero eigenvalue.

Consider the set $S$~\eqref{sdef}, its conventional convex hull $\conv(S)$,
the mappings $\pi\colon\conv(S)\to S$ and $\gamma\colon S\to S$ defined by 
\begin{equation}
\label{e:pigdef}
\pi(x):=(P_W x)/\sum_{i=1}^n (P_W x)_i\qquad
\gamma(x):=(A\otimes x)/\sum_{i=1}^n (A\otimes x)_i. 
\end{equation}
By the continuity of $A$ and since $A\otimes x\neq 0$ on $S$,  $\gamma$ is a continuous mapping of $S$ in itself. By the continuity of $P_W$ and since $P_W x\neq 0$ on 
$\conv(S)$ (see Lemma~\ref{l:proj}), $\pi$ is a continuous mapping of $\conv(S)$ in itself.
Also note that $\pi(x)=x$ for $x\in S$.
The composition
$\gamma\pi$ is continuous on $\conv(S)$. Applying Brouwer's fixed point theorem
to $\gamma\pi$ acting on $\conv(S)$ (which is a compact convex set) we obtain a point
$x$ satisfying $\gamma\pi(x)=x$. But then we have $x\in S$, since $\gamma\pi(x)\in S$. Since $\pi(x)=x$ for $x\in S$, we obtain $\gamma(x)=x$, so $x\in S\subseteq W$
is a nonzero eigenvector of $A$.

\subsection{Algebraic proof}
\label{ss:alg-proof}

We adapt a proof of the eigenvector existence theorem of Shpiz~\cite{Shpiz}.

The max-algebraic matrix powers of $A\in\Rpnn$ will be denoted by $A^{\otimes t}:=\underbrace{A\otimes\ldots\otimes A}_{t}$. We have $A^{\otimes 0}=I$, the identity matrix. We recall that the max-algebraic matrix multiplication is homogeneous: $A\otimes (rx)=rA\otimes x$ for all $x\in\Rpn$ and $r\in\Rp$, monotone: $x\leq y\Rightarrow A\otimes x\leq A\otimes y$ for all $x,y\in\Rpn$, and continuous.
By the continuity of max-algebraic matrix multiplication, the
extended max-linearity
\begin{equation}
\label{e:maxlinearity}
A\otimes \bigoplus_{\mu\in S} v^{\mu} =\bigoplus_{\mu\in S} A\otimes v^{\mu}
\end{equation}
holds for $A\in\Rpnn$ and any set $\{v^{\mu}\}_{\mu\in S}\subseteq\Rpn$ bounded from above.
The $\bigoplus$ sign denotes the componentwise supremum of a bounded (but possibly infinite) subset of $\Rpn$.

We will use the notation
\begin{equation}
\label{e:pseudodiv}
[v:w] =\max\{\lambda\mid \lambda w\leq v\}=\min_{i\colon w_i\neq 0} v_iw_i^{-1}.
\end{equation}
for $v,w\in\Rpn$, where $w\neq 0$. Note that $[v:w]\cdot w\leq v$.

The proof of Lemma~\ref{l:eigenvector} is preceded by the following technical result.

\begin{lemma}[Shpiz~\cite{Shpiz}, Lemma~2]
\label{l:shpiz}
Let $W$ be a nonzero minimal (under inclusion) closed max cone in $\Rpn$ invariant under the
max-algebraic multiplication by a matrix $A\in\Rpn$. Assume that
$A^{\otimes t} v\neq 0$ for any nonzero $v\in W$ and any $t\geq 1$.
For every integer $t\geq 1$,
\begin{itemize}
\item[(i)] $[v:A^{\otimes t}\otimes v]$ is the same for all
$v\in W$, $v\neq 0$.
\item[(ii)] $[v:A^{\otimes t}\otimes v]=[v:A\otimes v]^t$ holds for all $v\in W$, $v\neq 0$.
\end{itemize}
\end{lemma}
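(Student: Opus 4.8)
The plan is to prove both parts simultaneously, using the minimality of $W$ as the crucial engine, exactly as one does in Wielandt's proof of Perron--Frobenius but with $\max$ playing the role of sum.

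First I would establish part (i) for $t=1$, i.e.\ that $\lambda_v := [v:A\otimes v]$ is independent of the nonzero $v\in W$. Fix a nonzero $v\in W$ and set $\lambda=[v:A\otimes v]$, so $\lambda (A\otimes v)\le v$ by the definition~\eqref{e:pseudodiv}; note $\lambda>0$ because $A\otimes v\neq 0$ is assumed. The key construction is the set
\begin{equation}
\label{e:Wv}
W_v=\{w\in W\mid \lambda (A\otimes w)\le w\}.
\end{equation}
I would check that $W_v$ is a closed max cone: it is closed because $A\otimes(\cdot)$ is continuous; it is closed under componentwise maximum because if $\lambda(A\otimes w)\le w$ and $\lambda(A\otimes w')\le w'$ then by monotonicity and extended max-linearity~\eqref{e:maxlinearity}, $\lambda(A\otimes(w\oplus w'))=\lambda(A\otimes w)\oplus\lambda(A\otimes w')\le w\oplus w'$; and it is closed under nonnegative scalar multiples by homogeneity. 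It is invariant under $A$: if $w\in W_v$, apply $A\otimes(\cdot)$ (monotone) to $\lambda(A\otimes w)\le w$ to get $\lambda(A\otimes(A\otimes w))\le A\otimes w$, so $A\otimes w\in W_v$. Finally $W_v$ is nonzero since $v\in W_v$. By minimality of $W$ we conclude $W_v=W$, which says precisely that $\lambda(A\otimes w)\le w$, i.e.\ $[w:A\otimes w]\ge\lambda$, for every nonzero $w\in W$. Swapping the roles of $v$ and an arbitrary $w$ gives the reverse inequality, so $[w:A\otimes w]=\lambda$ for all nonzero $w\in W$. This proves (i) for $t=1$.

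Next I would handle general $t$. The same argument applied to $A^{\otimes t}$ in place of $A$ — using the hypothesis $A^{\otimes t}\otimes w\neq 0$ for all nonzero $w\in W$, and noting $W$ is also invariant under $A^{\otimes t}$ — shows that $[w:A^{\otimes t}\otimes w]$ is the same for all nonzero $w\in W$, which is (i) in general. For (ii), write $\lambda=[v:A\otimes v]$ for (any, hence all) nonzero $v\in W$. On one hand, iterating $\lambda(A\otimes w)\le w$: applying $A\otimes(\cdot)$ repeatedly and using homogeneity and monotonicity, $\lambda^t(A^{\otimes t}\otimes w)\le\lambda^{t-1}(A^{\otimes(t-1)}\otimes w)\le\cdots\le w$, hence $[w:A^{\otimes t}\otimes w]\ge\lambda^t$. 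For the reverse inequality, let $\mu=[v:A^{\otimes t}\otimes v]$ and set $v'=v\oplus\mu^{1/t}(A\otimes v)\oplus\mu^{2/t}(A^{\otimes 2}\otimes v)\oplus\cdots\oplus\mu^{(t-1)/t}(A^{\otimes(t-1)}\otimes v)$; this is a nonzero element of $W$. A short computation using $\mu(A^{\otimes t}\otimes v)\le v$ shows $\mu^{1/t}(A\otimes v')\le v'$, so $[v':A\otimes v']\ge\mu^{1/t}$, i.e.\ $\lambda\ge\mu^{1/t}$, whence $\mu\le\lambda^t$. Combining, $[w:A^{\otimes t}\otimes w]=\lambda^t=[w:A\otimes w]^t$.

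The main obstacle is verifying that the telescoping vector $v'$ really satisfies $\mu^{1/t}(A\otimes v')\le v'$; this is the one place where the specific exponents matter, and it requires carefully distributing $A\otimes(\cdot)$ over the finite $\oplus$ via~\eqref{e:maxlinearity}, then recognizing that the ``overflow'' term $\mu^{1/t}\cdot\mu^{(t-1)/t}(A^{\otimes t}\otimes v)=\mu(A^{\otimes t}\otimes v)$ is dominated by $v$, so the sum folds back into $v'$. Everything else is a routine check that the sets $W_v$ are legitimate closed invariant max cones, which only uses homogeneity, monotonicity, continuity, and extended max-linearity as recalled before the lemma.
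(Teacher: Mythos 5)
Your argument is correct, and part (i) together with the ``easy'' inequality $[v:A\otimes v]^t\le[v:A^{\otimes t}\otimes v]$ coincides with the paper's proof: the same test set $\{w\mid \lambda(A\otimes w)\le w\}$, the same appeal to minimality, the same swap of $v$ and $w$. Where you genuinely diverge is the reverse inequality in (ii). The paper forms the \emph{infinite} supremum $w=\bigoplus_{\ell\ge 0}r^{\ell}A^{\otimes\ell}\otimes v$ with $r=[v:A^{\otimes t}\otimes v]^{1/t}$, which forces it to prove separately that the sequence $\{r^{\ell}A^{\otimes\ell}\otimes v\}$ is bounded (via the decomposition $\ell=tp+q$) before it can apply the extended max-linearity \eqref{e:maxlinearity} and conclude $rA\otimes w\le w$. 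Your finite telescoping vector $v'=\bigoplus_{q=0}^{t-1}\mu^{q/t}A^{\otimes q}\otimes v$ achieves the same folding-back effect --- the overflow term $\mu\,A^{\otimes t}\otimes v$ is absorbed into $v$ --- while being a finite maximum of elements of $W$, so no boundedness argument and no infinite-supremum version of max-linearity are needed. This is a small but real simplification; the price is nil, since $v'\in W$, $v'\ne 0$ and $\mu^{1/t}(A\otimes v')\le v'$ are all immediate, and part (i) then gives $[v:A\otimes v]=[v':A\otimes v']\ge\mu^{1/t}$ exactly as the paper gets $[v:A\otimes v]=[w:A\otimes w]\ge r$.

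One small imprecision to repair in your part (i) for general $t$: you say ``the same argument applied to $A^{\otimes t}$ in place of $A$,'' but taken literally that would require $W$ to be minimal among closed $A^{\otimes t}$-invariant max cones, which is not what is assumed. The correct observation, which the paper makes explicit, is that the set $\{w\in W\mid \mu\,(A^{\otimes t}\otimes w)\le w\}$ is itself invariant under $A$ (apply $A$ to both sides and use that $A$ commutes with $A^{\otimes t}$), so the minimality of $W$ as an $A$-invariant cone applies directly. With that one-line fix your proof is complete.
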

\begin{proof}
(i): For each $v\in W$, $v\neq 0$,
\begin{equation}
W_{v,t}:=\{w\mid [v:A^{\otimes t}\otimes v]\cdot A^{\otimes t}\otimes w\leq w\}
\end{equation}
is a closed subspace invariant under $A$. It is nontrivial (i.e., not reduced to the
zero vector) since $v\in W_{v,t}$. By the minimality of $W$ we obtain $W_{v,t}=W$, and
using~\eqref{e:pseudodiv} this implies that $[v:A^{\otimes t}\otimes v]\leq
[w:A^{\otimes t}\otimes w]$ for all nonzero $v,w\in W$. Swapping $v$ and $w$ we get the reverse
inequality, which proves the claim.

(ii): Fix a nonzero $v\in W$,
and denote $r:=[v:A^{\otimes t}\otimes v]^{1/t}$.
It will be shown that $[v:A\otimes v]=r$.  The inequality $[v:A\otimes v]\leq r$ is relatively
easy: iterating $[v:A\otimes v]\cdot A\otimes v\leq v$ one obtains that
$[v:A\otimes v]^t\cdot A^{\otimes t}\otimes v\leq v$, and
this implies $[v:A\otimes v]^t\leq [v:A^{\otimes t}\otimes v]$.

For the reverse inequality, consider the vector $w:=\bigoplus_{\ell\geq 0} r^\ell A^{\otimes \ell}\otimes v$. It will be shown a bit later that the sequence of vectors $\{r^tA^{\otimes \ell}\otimes v\}_{\ell\geq 0}$ is bounded, so its supremum is finite. Applying $rA$ to $w$ and using the extended
max-linearity of $A$~\eqref{e:maxlinearity} we get
$$
rA\otimes w=\bigoplus_{\ell\geq 1} r^\ell A^{\otimes \ell}\otimes v\leq w.
$$
It follows that $r\leq [w:A\otimes w]$, and since $[w:A\otimes w]=[v:A\otimes v]$
by part (i), we obtain $r\leq [v:A\otimes v]$ and thus $r=[v:A\otimes v]$.

To complete the proof, it remains to show that the sequence $\{r^{\ell}A^{\otimes \ell}\otimes v\}_{\ell\geq 0}$ is bounded. For this, it is observed, by representing
$\ell=tp+q$ with $p\geq 0$ and $0\leq q<t$, by repeatedly applying
$[v:A^{\otimes t}\otimes v] A^{\otimes t}\otimes v\leq v$ and using the homogeneity
and monotonicity of max-algebraic matrix multiplication, that
$$
r^\ell A^{\otimes\ell}\otimes v=[v:A^{\otimes t}\otimes v]^{p+(q/t)} A^{\otimes tp+q}\otimes v
\leq [v:A\otimes v]^{q/t} A^{\otimes q}\otimes v.
$$
Thus $r^\ell A^{\otimes\ell}\otimes v$ can be bounded by
the componentwise maximum of all vectors $[v:A\otimes v]^{q/t}\cdot A^{\otimes q}\otimes v$, over $q$ from $0\leq q<t$.
\end{proof}

\medskip\noindent{\bf Proof of Lemma~\ref{l:eigenvector}}~(\cite{Shpiz}~Theorem~3):

If $A^{\otimes t}\otimes v=0$ for some nonzero $v\in W$ and $t\geq 1$ then $A$ has an eigenvector with zero eigenvalue.

Assume that
$A^{\otimes t} v\neq 0$ for any nonzero $v\in W$ and any $t\geq 1$.
We can also assume without loss of generality that $W$ is a minimal (under inclusion) closed max cone invariant under the max-algebraic multiplication by $A$, otherwise we can use Lemma~\ref{l:inters-nontriv} and select such a max cone by means of Zorn's Lemma.
Consider the set $C=\{x\in W\colon \max_{i=1}^n x_i=1\}$. This set is closed
under taking componentwise maxima and compact, hence $C$ contains
the greatest point $\max C$ with respect to the usual componentwise (partial) order in
$\Rpn$. Set $v:=\max C$ and let
$\alpha_t:=\max_{i=1}^n (A^{\otimes t}\otimes v)_i$ for any $t\geq 1$, then
$\max_{i=1}^n (\alpha_t^{-1}A^{\otimes t}\otimes v)_i=1$ and hence
$\alpha_t^{-1}A^{\otimes t}\otimes v\leq v$ by the definition of $v$. If
$\kappa>\alpha_t^{-1}$  then $\max_{i=1}^n (\kappa A^{\otimes t}\otimes v)_i=\kappa\alpha_t>1$ while
$\max_{i=1}^n v_i=1$ so
$\kappa A^{\otimes t}\otimes v\not\leq v$. This implies that
$\alpha_t^{-1}=[v: A^{\otimes t}\otimes v]$, and then
$\alpha_t^{-1}=[v:A\otimes v]^t=\alpha_1^{-t}$ by Lemma~\ref{l:shpiz}.

Consider the sequence of vectors $\{\alpha_1^{-t}A^{\otimes t}\otimes v\}_{t\geq 0}$,
which is the same as $\{\alpha_t^{-1}A^{\otimes t}\otimes v\}_{t\geq 0}$.
On one hand, the greatest component of each vector is $1$, by the above.
On the other hand, iterating $v\geq \alpha_1^{-1}A\otimes v$ we obtain that
$v\geq \alpha_1^{-1}A\otimes v\geq \alpha_1^{-2}A^{\otimes 2}\otimes v\geq\ldots$,
hence $\{\alpha_1^{-t}A^{\otimes t}\otimes v\}_{t\geq 0}$ has the limit, which we
denote by $u$. By the continuity $u$ satisfies $\alpha_1^{-1}A\otimes u=u$
and $\max_{i=1}^n u_i=1$, in particular $u\neq 0$. Since $C$ is closed, $u\in C\subseteq W$.
Hence $u$ is an eigenvector of $A$ belonging to $W$. The proof is complete.

\section{Acknowledgement}
The authors are grateful to Prof. Peter Butkovi\v{c} and Prof. Hans Schneider
for a number of important corrections and useful advice, and to the anonymous referee for constructive criticism.


\end{document}